\tikzset{node distance=2cm, auto}
\newcommand{\Zc}{\mathcal{Z}}
\newcommand{\hmod}{{_H\mathcal{M}}}
\newcommand{\hhash}{{^{\#}_H \mathcal{M}}}
\newcommand{\Nc}{\mathcal{N}}
\newcommand{\ns}[1]{~\hspace{-3pt}_{\left<#1\right>}}
\newcommand{\id}{1}
\newcommand{\la}{\triangleright}
\newcommand{\ra}{\triangleleft}
\newcommand{\lab}{\blacktriangleright}
\newcommand{\rab}{\blacktriangleleft}
\def\ot{\otimes}
\numberwithin{equation}{section}
\newtheorem{Theorem}{Theorem}[section]
\newtheorem{Lemma}[Theorem]{Lemma}
 { \theoremstyle{definition}
\newtheorem{Definition}[Theorem]{Definition}
\newtheorem{Remark}[Theorem]{Remark} }
\begin{document}
\allowdisplaybreaks

\newcommand{\arXivNumber}{1804.02031}

\renewcommand{\PaperNumber}{098}

\FirstPageHeading

\ShortArticleName{Anti-Yetter--Drinfeld Modules for Quasi-Hopf Algebras}

\ArticleName{Anti-Yetter--Drinfeld Modules\\ for Quasi-Hopf Algebras}

\Author{Ivan KOBYZEV~$^\dag$ and Ilya SHAPIRO~$^\ddag$}

\AuthorNameForHeading{I.~Kobyzev and I.~Shapiro}

\Address{$^\dag$~Department of Pure Mathematics, University of Waterloo, Waterloo, ON N2L 3G1, Canada}
\EmailD{\href{mailto:ikobyzev@uwaterloo.ca}{ikobyzev@uwaterloo.ca}}

\Address{$^\ddag$~Department of Mathematics and Statistics, University of Windsor,\\
\hphantom{$^\ddag$}~401 Sunset Avenue, Windsor, Ontario N9B 3P4, Canada}
\EmailD{\href{mailto:ishapiro@uwindsor.ca}{ishapiro@uwindsor.ca}}

\ArticleDates{Received April 20, 2018, in final form September 10, 2018; Published online September 13, 2018}

\Abstract{We apply categorical machinery to the problem of defining anti-Yetter--Drinfeld modules for quasi-Hopf algebras. While a definition of Yetter--Drinfeld modules in this setting, extracted from their categorical interpretation as the center of the monoidal category of modules has been given, none was available for the anti-Yetter--Drinfeld modules that serve as coefficients for a Hopf cyclic type cohomology theory for quasi-Hopf algebras. This is a~followup paper to the authors' previous effort that addressed the somewhat different case of anti-Yetter--Drinfeld contramodule coefficients in this and in the Hopf algebroid setting.}

\Keywords{monoidal category; cyclic homology; Hopf algebras; quasi-Hopf algebras}

\Classification{18D10; 18E05; 19D55; 16T05}

\section{Introduction}

It is an interesting fact that the theory of coefficients, in cyclic homology theories for Hopf algebras, began with what is now known as anti-Yetter--Drinfeld \emph{modules} in \cite{hkrs1, hkrs2, js} that followed \cite{CM98, CM99}. It was not until~\cite{contra} that anti-Yetter--Drinfeld \emph{contramodules} were introduced. The latter, in retrospect, seem a lot more natural, though they involve notions that are less so.

In this followup paper to \cite{sk}, we make the modifications necessary to deal with the definitions of anti-Yetter--Drinfeld modules for quasi-Hopf algebras, which generalize Hopf algebras by relaxing the coassociativity condition to coassociativity up to a specified isomorphism. This isomorphism complicated matters sufficiently that a generalization of the formulaic approach used for Hopf algebras, via the usual method of educated guessing, is not possible. More precisely, the definition of an anti-Yetter--Drinfeld module involves a module structure together with a compatible comodule structure, and while it turns out that both the module and the compatibility remain essentially the same, the notion of a comodule needs to be rather drastically changed.

Our approach to extracting the definitions of anti-Yetter--Drinfeld modules is similar to~\cite{majid}, where the categorical notion of the center of a monoidal category of modules over a quasi-Hopf algebra has been unwound into explicit formulas. Our task is complicated by two factors: we deal with a certain bimodule category over the category of modules over a quasi-Hopf algebra and we allow not only finite dimensional representations.

The main theme of \cite{sk} was exploiting the fact that the category of modules is biclosed, i.e., it possesses internal Homs. This allows for a definition of a natural bimodule category over it, the center of which is what we are looking for. The justification for the importance of the center is the observation that its elements (or rather the ones satisfying an additional stability condition) can be used to quickly manufacture a functor called a symmetric $2$-contratrace \cite{hks} and thus define a cyclic cohomology theory for quasi-Hopf algebras. The latter is not yet available in the literature.

As mentioned above, historically anti-Yetter--Drinfeld modules appeared before their contramodule versions, but in this paper we rely on the conceptual definition of generalized anti-Yetter--Drinfeld contramodules as in~\cite{sk} to obtain the module version. In particular the question of stability is explicitly reduced to the contramodule case, though it is immediate that it is equivalent to the stability in~\cite{hks}, though not completely analogous to what is possible to do in the Hopf algebra case; see Remark~\ref{sigmadef}.

The paper is organized as follows. In Section \ref{s:General} we review and augment some generalities from \cite{sk} that we use subsequently. Section \ref{s:qHopf} is a review of the basics of quasi-Hopf algebras and recalls the definition of internal Homs, from~\cite{sk}, for them. Finally in Section \ref{s:qHopf-ayd} we unravel the conceptual definitions of Section~\ref{s:General} into formulas. More precisely, we will derive concrete definitions of anti-Yetter--Drinfeld modules in terms of structures of a module, ``comodule'', and their compatibility. An interesting observation is the appearance of two distinct ways of writing down the formulas; these are identical for Hopf algebras but very different here. This also occurs in the contramodule case~\cite{sk}.

\textbf{Notation.} We use the following particular form of Sweedler's notation. For a coalgebra $A$ (not necessarily coassociative) we denote the coproduct $\Delta(a)$ of an element $a\in A$ by $a^1\ot a^2$. We use only lowercase letters to denote elements of a coalgebra so that $S^2$ always means the square of the antipode, and never a part of a coproduct. The composition $(\operatorname{Id}\ot\Delta)\circ\Delta$ applied to $a$ is written as $a^1\ot a^{21}\ot a^{22}$, and $(\Delta\ot \operatorname{Id})\circ\Delta(a)=a^{11}\ot a^{12}\ot a^2$. On the other hand if $M$ is a~right ``comodule'' (where the quotation marks emphasize the lack of any implied coassociativity) then the structure map $\Delta\colon M\to M\ot A$ is written differently depending on the context, i.e., $\Delta(m)=m_{0}\ot m_{1}$ in the Hopf algebra case (where it is a right comodule), $\Delta(m)=m\ns{0}\ot m\ns{1}$ in the Type I case (where a modified coassociativity condition holds), and $\Delta(m)=m_{[0]} \otimes m_{[1]}$ in the Type II case (with its own version of coassociativity).

\begin{Remark}Throughout the paper we assume that the antipode $S$ is invertible. In particular, its inverse is explicitly used in~\eqref{righthom} in the definition of the right internal Hom. It has been pointed out to us by a referee that one can still define the right internal Hom in the absence of~$S^{-1}$. Indeed at this point we must treat the existence of the inverse as a possibly removable technical condition. However since non-invertible antipodes are somewhat pathological and while the presence of the inverse may not be required, its absence does complicate the exposition, we proceed as before, having conceded this point.
 \end{Remark}

\section{Generalities}\label{s:General}

In this section we will extend the general formalism of \cite{sk} from generalized anti-Yetter--Drinfeld contramodule coefficients to their module variant. Recall from \cite{hks} that the main ingredient in constructing Hopf-cyclic cohomology is a symmetric 2-contratrace. It is with a view towards this goal that we undertake the following.

 Let $\mathcal{M}$ be a biclosed monoidal category, i.e., it possesses internal Homs. More precisely, the property of being biclosed implies in particular the existence of the following adjunctions
 for $M, V, W\in \mathcal{M}$:
 \begin{gather*}
 \operatorname{Hom}_\mathcal{M} (W\otimes V, M) \simeq \operatorname{Hom}_\mathcal{M} \big(W, \operatorname{Hom}^l(V,M)\big),
 \end{gather*}
 and{\samepage
 \begin{gather*}
 \operatorname{Hom}_\mathcal{M} (V\otimes W, M) \simeq \operatorname{Hom}_\mathcal{M} \big(W, \operatorname{Hom}^r(V,M)\big),
 \end{gather*}
 where $\operatorname{Hom}^l(V,M) $ and $\operatorname{Hom}^r(V,M)$ are left and right internal homomorphisms respectively.}

As in \cite{s}, we can introduce the contragradient $\mathcal{M}$-bimodule category $\mathcal{M}^{\rm op}$. Specifically, for $M\in \mathcal{M}^{\rm op}$ and $V\in \mathcal{M}$, the actions are given by
 \begin{gather*}
 M\rab V := \operatorname{Hom}^r(V, M) \qquad \text{and} \qquad V\lab M :=\operatorname{Hom}^l(V, M) .
 \end{gather*}

A natural object to consider in this situation is the center of a bimodule category $\Nc$. Roughly speaking, it is a category with a left and a right action of our monoidal category $\mathcal{M}$, where we denote the actions by $\la$ and $\ra$ respectively. However, here it becomes too restrictive. If in the definition of a (strong) center element $N\in\Nc$, we relax the condition that, for all $N'\in\mathcal{M}$ the maps $\tau\colon N'\la N\to N\ra N'$ in $\Nc$ are isomorphisms, we get a \emph{weak} center. More formally:

 \begin{Definition} The weak center $ \text{\rm w-}\Zc_{\mathcal{M}}(\mathcal{N})$ of a~$\mathcal{M}$-bimodule category $\mathcal{N}$ consists of objects that are pairs $(N, \tau_{N,-})$, where $N \in \mathcal{N}$ and $\tau_{N,-}$ is a family of natural morphisms such that for $V \in \mathcal{M}$ we have: $\tau_{N,V}\colon V \la N \to N \ra V $, satisfying the hexagon axiom as in the usual definition of center (see~\cite{ENO}).
 \end{Definition}

 We need one more definition.

\begin{Definition}Let $\mathcal{M} $ be a biclosed monoidal category and $\mathcal{M}^{\rm op}$ the contragradient category as above. For $M\in \text{\rm w-}\Zc_{\mathcal{M}}(\mathcal{M}^{\rm op})$ let
\begin{gather*}
\sigma_M\colon \ M\to M
\end{gather*} denote the image of $\operatorname{Id}_M$ under \begin{gather*}\operatorname{Hom}_\mathcal{M}(M,M)\simeq\operatorname{Hom}_\mathcal{M}(\id,M\lab M)\to\operatorname{Hom}_\mathcal{M}(\id,M\rab M)\simeq \operatorname{Hom}_\mathcal{M}(M,M),\end{gather*}
where the map in the middle is postcomposition with $\tau$ and the isomorphisms come from the definitions of the actions in $\mathcal{M}^{\rm op}$.
\end{Definition}

We have from \cite{sk}:
\begin{Lemma}\label{weakstrong} If $M\in \text{\rm w-}\Zc_{\mathcal{M}}(\mathcal{M}^{\rm op})$ and $\sigma_M=\operatorname{Id}$ then $M\in \Zc_{\mathcal{M}}(\mathcal{M}^{\rm op})$.
 \end{Lemma}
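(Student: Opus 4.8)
The plan is to check the single property that separates $\Zc_{\mathcal{M}}(\mathcal{M}^{\rm op})$ from $\text{\rm w-}\Zc_{\mathcal{M}}(\mathcal{M}^{\rm op})$: that every component $\tau_{M,V}\colon V\lab M\to M\rab V$, i.e.\ $\tau_{M,V}\colon\operatorname{Hom}^l(V,M)\to\operatorname{Hom}^r(V,M)$, is an isomorphism (the hexagon and the naturality in $V$ being already available from membership in the weak center). Applying $\operatorname{Hom}_{\mathcal{M}}(X,-)$ and using the defining adjunctions, $\tau_{M,V}$ becomes a map $t^V_X\colon\operatorname{Hom}_{\mathcal{M}}(X\ot V,M)\to\operatorname{Hom}_{\mathcal{M}}(V\ot X,M)$, and by the Yoneda lemma $\tau_{M,V}$ is an isomorphism exactly when $t^V_X$ is bijective for every $X$. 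So it suffices to produce two-sided inverses of the maps $t^V_X$.

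First I would compute $t^V_\id$. Taking $X=\id$ and transporting through the unit constraints, the family $t^V_\id\colon\operatorname{Hom}_{\mathcal{M}}(V,M)\to\operatorname{Hom}_{\mathcal{M}}(V,M)$ is a natural endomorphism of the functor $\operatorname{Hom}_{\mathcal{M}}(-,M)$, the naturality coming from naturality of $\tau_{M,-}$ in its second argument together with the compatibility of the adjunction (naming) isomorphisms with composition. By Yoneda such an endomorphism is post-composition with the single morphism $t^M_\id(\operatorname{Id}_M)\in\operatorname{Hom}_{\mathcal{M}}(M,M)$, and by the very definition of $\sigma_M$ this morphism is $\sigma_M$. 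Hence $t^V_\id=\sigma_M\circ(-)$ for every $V$.

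Next I would feed the hexagon axiom at a pair $(V,W)$ through the same adjunctions. Since $(V\ot W)\lab M=V\lab(W\lab M)$ and $M\rab(V\ot W)=(M\rab V)\rab W$, the hexagon presents $\tau_{M,V\ot W}$ as $(\tau_{M,V}\rab W)$ precomposed with the associativity constraint of $\mathcal{M}^{\rm op}$ precomposed with $(V\lab\tau_{M,W})$; applying $\operatorname{Hom}_{\mathcal{M}}(\id,-)$ and absorbing the coherence isomorphisms gives $t^{V\ot W}_\id=t^V_W\circ t^W_V$ as endomorphisms of $\operatorname{Hom}_{\mathcal{M}}(V\ot W,M)$. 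Combined with the previous step and the hypothesis $\sigma_M=\operatorname{Id}$, this reads $t^V_W\circ t^W_V=\operatorname{Id}$, and the instance with $V$ and $W$ interchanged gives $t^W_V\circ t^V_W=\operatorname{Id}$ as well. Therefore each $t^V_W$ is bijective with inverse $t^W_V$; hence every $t^V_X$ is bijective, every $\tau_{M,V}$ is an isomorphism, and $M\in\Zc_{\mathcal{M}}(\mathcal{M}^{\rm op})$.

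The conceptual content here is light; I expect the only real work to be the bookkeeping — verifying that pushing $\tau_{M,-}$ and the hexagon through the chain of adjunction isomorphisms, together with the unit and associativity constraints of $\mathcal{M}$ and of the bimodule category $\mathcal{M}^{\rm op}$, really yields the clean identities $t^V_\id=\sigma_M\circ(-)$ and $t^{V\ot W}_\id=t^V_W\circ t^W_V$ with no residual coherence morphisms. Replacing $\mathcal{M}$ by an equivalent strict monoidal category at the outset keeps this under control.
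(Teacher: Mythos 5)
Your argument is correct: reducing the invertibility of $\tau_{M,V}\colon V\lab M\to M\rab V$ via the internal-Hom adjunctions and Yoneda to the maps $t^V_X\colon\operatorname{Hom}_{\mathcal{M}}(X\ot V,M)\to\operatorname{Hom}_{\mathcal{M}}(V\ot X,M)$, identifying $t^V_\id$ with postcomposition by $\sigma_M$, and then reading the hexagon as $t^{V\ot W}_\id=t^V_W\circ t^W_V$ so that stability makes $t^V_W$ and $t^W_V$ mutual inverses, is exactly the mechanism behind this lemma. The paper itself gives no proof here (it quotes the statement from its companion paper \cite{sk}), but your route is the one that argument takes, and your flagged coherence bookkeeping is indeed the only remaining verification.
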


 In \cite{sk}, we called such $M$ stable and denoted the full subcategory containing them by $\Zc_{\mathcal{M}}'(\mathcal{M}^{\rm op})$. These are exactly the generalized stable anti-Yetter--Drinfeld contramodules, and the functor $\operatorname{Hom}_{\mathcal{M}}(-,M)$ is a symmetric $2$-contratrace. Recall from \cite{hks}, that once we have a~contratrace, the cyclic cohomology theory immediately follows. More precisely, for an algebra $A\in\mathcal{M}$ the collection $\operatorname{Hom}_{\mathcal{M}}(A^{\ot\bullet+1},M)$ is naturally a~cocyclic vector space from which the cohomology is obtained.

 \subsection{The module variant modifications}
 Assume as above that $\mathcal{M}$ is biclosed and suppose further that there exists a tensor auto-equivalence $(-)^{\#}$ of $\mathcal{M}$, together with natural identifications
 \begin{gather*}
 W\lab 1\simeq 1\rab W^{\#}.
 \end{gather*}
 Observe that should such a functor exist, we would immediately have natural identifications
 \begin{gather*}
 \iota_{V,W}\colon \ \operatorname{Hom}_\mathcal{M}(V\ot W, 1)\simeq \operatorname{Hom}_\mathcal{M}\big(W^{\#}\ot V, 1\big)
 \end{gather*}
 for all $V,W\in\mathcal{M}$.

 Consider ${^{\#}\mathcal{M}}$, the $\mathcal{M}$-bimodule category with the right and left $\mathcal{M}$-module structures given by
 \begin{gather*}
 M \ra V= M\otimes V \qquad \text{and} \qquad V \la M= V^\# \otimes M .
 \end{gather*}

 \begin{Lemma}\label{dfun}We have the functor between weak centers
\begin{gather*}
D\colon \ \big(\text{\rm w-}\Zc_{\mathcal{M}}({^{\#}\mathcal{M}})\big)^{\rm op}\to \text{\rm w-}\Zc_{\mathcal{M}}\big(\mathcal{M}^{\rm op}\big)
\end{gather*}
that sends $M$ to $1\rab M$.
 \end{Lemma}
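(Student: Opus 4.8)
The plan is to construct $D$ explicitly and then verify functoriality and that the output lies in the target weak center. Given $M \in \text{w-}\Zc_{\mathcal{M}}({^{\#}\mathcal{M}})$ with its half-braiding $\tau_{M,V}\colon V^{\#}\ot M \to M\ot V$, I first need to produce a half-braiding on $1\rab M = \operatorname{Hom}^r(M,1)$ for the bimodule category $\mathcal{M}^{\rm op}$, i.e.\ a natural family $\tau'_{1\rab M, V}\colon V\lab(1\rab M) \to (1\rab M)\rab V$, which by the definitions of the actions in $\mathcal{M}^{\rm op}$ means a map $\operatorname{Hom}^l(V, 1\rab M)\to \operatorname{Hom}^r(V, 1\rab M)$. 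The first key step is to rewrite both sides using the internal-Hom adjunctions: $\operatorname{Hom}_\mathcal{M}(W, \operatorname{Hom}^l(V,1\rab M)) \simeq \operatorname{Hom}_\mathcal{M}(W\ot V, 1\rab M)\simeq \operatorname{Hom}_\mathcal{M}((W\ot V)\ot M, 1)$, and similarly $\operatorname{Hom}_\mathcal{M}(W,\operatorname{Hom}^r(V,1\rab M))\simeq \operatorname{Hom}_\mathcal{M}(V\ot W, 1\rab M)\simeq \operatorname{Hom}_\mathcal{M}((V\ot W)\ot M,1)$. So by Yoneda it suffices to give a natural isomorphism between $\operatorname{Hom}_\mathcal{M}(W\ot V\ot M,1)$ and $\operatorname{Hom}_\mathcal{M}(V\ot W\ot M,1)$.

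The second key step is to build that isomorphism out of the available data: apply $\iota$ to move $W$ across, namely $\operatorname{Hom}_\mathcal{M}(W\ot(V\ot M),1)\simeq \operatorname{Hom}_\mathcal{M}((V\ot M)^{\#}\ot W,1)\simeq \operatorname{Hom}_\mathcal{M}(V^{\#}\ot M^{\#}\ot W,1)$ using that $(-)^{\#}$ is tensor; then use the half-braiding $\tau_{M,-}$ of $M$ — more precisely its instance relating $V^{\#}\ot M$ to $M\ot V$, read through the two module structures on ${^{\#}\mathcal{M}}$ (where $V\la M = V^{\#}\ot M$ and $M\ra V = M\ot V$) — to turn $V^{\#}\ot M^{\#}$ into something of the form $M^{\#}\ot(\cdots)$, and finally apply $\iota$ again (and $(-)^{\#}$ being an equivalence) to land back in $\operatorname{Hom}_\mathcal{M}(V\ot W\ot M,1)$. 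Naturality in $W$ is automatic since every piece is natural; naturality in $V$ and the hexagon axiom for $\tau'$ must then be checked, and here the hexagon for $\tau'$ should reduce, after unwinding the adjunctions and the coherence isomorphisms $W\lab 1\simeq 1\rab W^{\#}$, to the hexagon for $\tau$ together with the compatibility of $\iota$ with tensoring.

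For functoriality: a morphism $M\to M'$ in $\text{w-}\Zc_{\mathcal{M}}({^{\#}\mathcal{M}})$ is a morphism in $\mathcal{M}$ commuting with the half-braidings, and applying the contravariant functor $\operatorname{Hom}^r(-,1)=1\rab(-)$ gives $1\rab M' \to 1\rab M$; the op on the source accounts for the contravariance, so $D$ is covariant as stated. One checks this morphism intertwines $\tau'_{1\rab M'}$ and $\tau'_{1\rab M}$ by chasing through the adjunction isomorphisms, using naturality of $\iota$ and of the internal-Hom adjunctions — this is routine. The main obstacle I anticipate is bookkeeping: making sure the instance of $\tau_{M,-}$ used is exactly the one dictated by the bimodule structure of ${^{\#}\mathcal{M}}$ (the $\#$ sits on the left tensor factor of the left action but not the right), and that all the $\iota$'s and $\#$-coherences compose correctly so that the hexagon for $\tau'$ genuinely follows from the hexagon for $\tau$ rather than requiring extra hypotheses; getting the variances and the placement of $(-)^{\#}$ right throughout is where care is needed.
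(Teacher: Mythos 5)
Your overall strategy --- identify the functors represented by $V\lab(1\rab M)$ and $(1\rab M)\rab V$ and transport the half-braiding of $M$ across the identifications $\iota$ --- is in spirit the same as the paper's, which simply records the chain $V\lab(1\rab M)\simeq(V\lab 1)\rab M\simeq(1\rab V^{\#})\rab M\simeq 1\rab(V^{\#}\ot M)\to 1\rab(M\ot V)\simeq(1\rab M)\rab V$, the non-invertible arrow being $1\rab\tau$. However, there is a concrete error in your adjunction bookkeeping. Since $1\rab M=\operatorname{Hom}^r(M,1)$ and the right internal Hom adjunction reads $\operatorname{Hom}_{\mathcal M}(M\ot X,1)\simeq\operatorname{Hom}_{\mathcal M}(X,\operatorname{Hom}^r(M,1))$, the object $1\rab M$ represents the functor $X\mapsto\operatorname{Hom}_{\mathcal M}(M\ot X,1)$, with $M$ on the \emph{left}. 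Your identification $\operatorname{Hom}_{\mathcal M}(W\ot V,1\rab M)\simeq\operatorname{Hom}_{\mathcal M}((W\ot V)\ot M,1)$ instead invokes the left internal Hom adjunction, i.e., it computes maps into $M\lab 1=\operatorname{Hom}^l(M,1)\simeq 1\rab M^{\#}$ rather than into $1\rab M$. By Yoneda you would therefore be equipping the wrong object with a half-braiding (off by $(-)^{\#}$), and your $D$ would be $M\mapsto 1\rab M^{\#}$.

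This is not a harmless slip, because in this non-symmetric, non-coassociative setting the left/right distinction is the entire content. The mismatch surfaces immediately in your second step: after applying $\iota$ you arrive at $V^{\#}\ot M^{\#}\ot W$ and propose to use the half-braiding to move $M^{\#}$ leftward, but $\tau$ is a map $V^{\#}\ot M\to M\ot V$ with no $\#$ on $M$, so it does not act where you need it; applying the functor $(-)^{\#}$ to $\tau_U$ yields $U^{\#\#}\ot M^{\#}\to M^{\#}\ot U^{\#}$, which forces a re-indexing and still produces a structure on $1\rab M^{\#}$. With the correct identification the argument closes quickly: $V\lab(1\rab M)$ represents $X\mapsto\operatorname{Hom}_{\mathcal M}(M\ot(X\ot V),1)\simeq\operatorname{Hom}_{\mathcal M}(V^{\#}\ot(M\ot X),1)$ via $\iota$, while $(1\rab M)\rab V$ represents $X\mapsto\operatorname{Hom}_{\mathcal M}((M\ot V)\ot X,1)$, and precomposition with $\tau\ot\operatorname{Id}_X$ gives the required natural transformation --- a morphism in $\mathcal M^{\rm op}$, not an isomorphism, which is all the weak center demands and all that is available. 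Your discussion of functoriality and of the op accounting for the contravariance of $\operatorname{Hom}^r(-,1)$ is correct.
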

 \begin{proof}
The weak center structure on $1\rab M$ is obtained as follows: $V\lab(1\rab M)\simeq (V\lab 1)\rab M\simeq (1\rab V^{\#})\rab M\simeq 1\rab(V^{\#}\ot M)\to 1\rab(M\ot V)\simeq(1\rab M)\rab V$.
 \end{proof}

 Recall the definition of stability for generalized anti-Yetter--Drinfeld modules from \cite{hks}. For $M\in\text{\rm w-}\Zc_{\mathcal{M}}({^{\#}\mathcal{M}})$, if \begin{gather*}
 \operatorname{Hom}_\mathcal{M}(M\otimes V,1) \xrightarrow{-\circ\tau} \operatorname{Hom}_\mathcal{M}(V^\# \otimes M,1) \xrightarrow{\iota^{-1}_{M,V}} \operatorname{Hom}_\mathcal{M}(M\otimes V,1)
 \end{gather*}
 is the identity, then $M$ is called stable. It is immediate that the following definition is equivalent to this one.

 \begin{Definition}\label{stab}
Let $M\in\text{\rm w-}\Zc_{\mathcal{M}}({^{\#}\mathcal{M}})$, we say that $M$ is stable if $DM\in\text{\rm w-}\Zc_{\mathcal{M}}(\mathcal{M}^{\rm op})$ is stable, i.e., if $\sigma_{1\rab M}=\operatorname{Id}$.
 \end{Definition}

 We have the following analogue of Lemma \ref{weakstrong}:

\begin{Lemma}Suppose that $M\in\text{\rm w-}\Zc_{\mathcal{M}}({^{\#}\mathcal{M}})$ is stable. Assume that $D$ reflects isomorphisms $($when considered as a functor from $({^{\#}\mathcal{M}})^{\rm op}$ to $\mathcal{M}^{\rm op})$, then $M\in\Zc_{\mathcal{M}}({^{\#}\mathcal{M}})$. We will denote the full subcategory of such $M$ by $\Zc'_{\mathcal{M}}({^{\#}\mathcal{M}})$.
 \end{Lemma}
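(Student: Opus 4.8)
The plan is to transport the stability hypothesis through the functor $D$, apply Lemma~\ref{weakstrong} in the already-settled contramodule setting, and then pull the resulting invertibility back along $D$. So first I would fix notation: write the weak center structure of $M\in\text{\rm w-}\Zc_{\mathcal{M}}({^{\#}\mathcal{M}})$ as a family of natural morphisms $\tau_{M,V}\colon V^{\#}\ot M\to M\ot V$ in $\mathcal{M}$, i.e.\ $\tau_{M,V}\colon V\la M\to M\ra V$. By definition, $M$ lies in the strong center $\Zc_{\mathcal{M}}({^{\#}\mathcal{M}})$ exactly when every $\tau_{M,V}$ is an isomorphism, so this is the assertion to be proved.

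Next I would revisit the construction in the proof of Lemma~\ref{dfun}. There the weak center structure on $DM=1\rab M$ is produced as the composite
\begin{gather*}
V\lab(1\rab M)\simeq(V\lab 1)\rab M\simeq(1\rab V^{\#})\rab M\simeq 1\rab(V^{\#}\ot M)\to 1\rab(M\ot V)\simeq(1\rab M)\rab V,
\end{gather*}
in which every arrow labelled $\simeq$ is a canonical natural isomorphism (coming from the bimodule associativity constraints of $\mathcal{M}^{\rm op}$ and from the identification $W\lab 1\simeq 1\rab W^{\#}$), while the single remaining arrow is the image of $\tau_{M,V}$ under $D$. Consequently $\tau_{DM,V}$ is an isomorphism if and only if $D(\tau_{M,V})$ is.

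Now I would invoke the hypotheses. By Definition~\ref{stab}, the assumption that $M$ is stable means precisely $\sigma_{1\rab M}=\operatorname{Id}$, that is, $DM$ is a stable object of $\text{\rm w-}\Zc_{\mathcal{M}}(\mathcal{M}^{\rm op})$. Lemma~\ref{weakstrong} then gives $DM\in\Zc_{\mathcal{M}}(\mathcal{M}^{\rm op})$, so every structure map $\tau_{DM,V}$ is an isomorphism, whence $D(\tau_{M,V})$ is an isomorphism for each $V\in\mathcal{M}$ by the preceding paragraph. Finally, since $D$ reflects isomorphisms as a functor $({^{\#}\mathcal{M}})^{\rm op}\to\mathcal{M}^{\rm op}$, it follows that $\tau_{M,V}$ is an isomorphism in ${^{\#}\mathcal{M}}$ for all $V$, i.e.\ $M\in\Zc_{\mathcal{M}}({^{\#}\mathcal{M}})$, placing $M$ in the full subcategory $\Zc'_{\mathcal{M}}({^{\#}\mathcal{M}})$ of stable objects of the strong center.

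The step requiring the most care is the identification in the second paragraph: one must check that the composite defining $\tau_{DM,V}$ genuinely sandwiches $D(\tau_{M,V})$ between isomorphisms that do not themselves depend on $\tau_{M,V}$ (and that the variances line up), so that invertibility transfers in both directions and the reflecting-isomorphisms hypothesis applies to $\tau_{M,V}$ itself. The remaining steps — the unwinding of Definition~\ref{stab}, the appeal to Lemma~\ref{weakstrong}, and the final use of reflection of isomorphisms — are formal.
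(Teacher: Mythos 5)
Your proposal is correct and follows essentially the same route as the paper: use Definition~\ref{stab} to get $\sigma_{1\rab M}=\operatorname{Id}$, apply Lemma~\ref{weakstrong} to put $DM$ in the strong center, read off from the composite in the proof of Lemma~\ref{dfun} that the centrality map of $DM$ is $D(\tau_{M,V})$ up to canonical isomorphisms, and conclude by reflection of isomorphisms. The only difference is that you spell out the sandwiching step that the paper leaves implicit in its reference to the proof of Lemma~\ref{dfun}.
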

\begin{proof}By definitions if $M$ is stable, then $\sigma_{1\rab M}=\operatorname{Id}$, and so by Lemma \ref{weakstrong} and the proof of Lemma \ref{dfun} the centrality map $\tau\colon V^{\#}\ot M\to M\ot V$ is such that its right dual is an isomorphism, i.e., $D(\tau)$ is an isomorphism. If $D$ reflects isomorphisms then $\tau$ is an isomorphism as well.
\end{proof}

The $M$ of the lemma above are exactly the generalized stable anti-Yetter--Drinfeld modules. We note that the functor $\operatorname{Hom}_\mathcal{M}(M\ot-,1)$ is a symmetric $2$-contratrace in this case. This follows immediately from its isomorphism to $\operatorname{Hom}_\mathcal{M}(-,DM)$ as $DM\in\Zc_{\mathcal{M}}'(\mathcal{M}^{\rm op})$.

 \begin{Remark}\label{sigmadef}
The current approach to general stability of aYD modules, via stability of aYD contramodules (implicitly as in \cite{hks} or explicitly as in Definition \ref{stab}), may seem unsatisfactory but it is the only way in general. In particular situations one may do better. More precisely, it may happen that for $M\in\text{\rm w-}\Zc_{\mathcal{M}}({^{\#}\mathcal{M}})$ the map $\sigma_{DM}$ may have a predual, i.e., a $\sigma_M$ such that $\operatorname{Id}\rab\sigma_M=\sigma_{1\rab M}$. This is the case for Hopf algebras where $\sigma_M(m)=m_{1}m_{0}$, but this question remains open in the quasi-Hopf algebra case.
 \end{Remark}

\section{Recalling quasi-Hopf algebras}\label{s:qHopf}
 Let us remind the reader of all the necessary definitions following~\cite{Drinfeld}. In this section, $k$ is a field.
\begin{Definition}A quasi-bialgebra is a collection $(A, \Delta, \varepsilon, \Phi)$, where $A$ is an associative $k$-algebra with unity, $\Delta\colon A \rightarrow A\otimes A$ and $\varepsilon\colon A \rightarrow k $ are homomorphisms of algebras, $\Phi \in A \otimes A\otimes A$ is an invertible element, such that the following equalities hold:
\begin{gather}
(\operatorname{Id} \otimes \Delta) (\Delta (a) ) = \Phi \cdot ( (\Delta \otimes \operatorname{Id}) (\Delta(a) ) ) \cdot \Phi^{-1}, \qquad \forall\, a \in A ,\label{coassoc}\\
\lbrack (\operatorname{Id} \otimes \operatorname{Id} \otimes \Delta)(\Phi) \rbrack \cdot \lbrack (\Delta \otimes \operatorname{Id} \otimes \operatorname{Id})(\Phi) \rbrack = (1 \otimes \Phi) \cdot \lbrack (\operatorname{Id} \otimes \Delta \otimes \operatorname{Id})(\Phi) \rbrack \cdot (\Phi \otimes 1),\label{phi} \\
 (\varepsilon \otimes \operatorname{Id} ) (\Delta (a)) = a, \qquad (\operatorname{Id} \otimes \varepsilon ) (\Delta (a)) = a, \qquad \forall\, a \in \mathcal{A}, \label{one} \\
\operatorname{Id} \otimes \varepsilon \otimes \operatorname{Id} (\Phi) = 1 \otimes 1. \label{unass}
\end{gather}
\end{Definition}

 \begin{Remark}\label{Sweedler}In this paper we will use the Sweedler notation. Let's denote
 \begin{gather*}
 \Phi = X \otimes Y \otimes Z, \qquad \Phi^{-1} = P \otimes Q \otimes R ,
 \end{gather*}
here we mean the summation. In particular, the equality \eqref{coassoc} can be written as
\begin{gather*}
a^1\otimes a^{21}\otimes a^{22} = Xa^{11}P\otimes Ya^{12}Q \otimes Z a^2 R.
\end{gather*}
\end{Remark}

We are interested in the category of left $A$-modules $_A\mathcal{M}$. It was proved in~\cite{Drinfeld} that this category is monoidal if a tensor product of two left $A$-modules $M$ and $N$ is defined by the same formula as in the case of a bialgebra
\begin{gather*}
 M \otimes N = M \otimes_k N, \qquad a\cdot (m\otimes n) = a^1m\otimes a^2n.
\end{gather*}

The associativity morphism is no longer trivial as it was in the case of a bialgebra. If one sets the associativity morphism $(M \otimes N)\otimes L \rightarrow M \otimes (N\otimes L)$ to be the image of $\Phi$ in $\operatorname{End}_k(M \otimes N \otimes L)$, then it becomes an isomorphism of left $A$-modules by~\eqref{coassoc}. Consider $k$ as an $A$-module by $a \cdot 1 = \varepsilon(a) 1 $ as in the bialgebra case. Then one defines a morphism $\lambda_M\colon k \otimes M \rightarrow M $ as the usual morphism of $k$-modules. Then $\lambda_M$ becomes an $A$-module morphism by~\eqref{one}. Similarly one can define a morphism $\rho_M\colon M\otimes k \rightarrow M $. So $k$ is a unit of the monoidal category $_A\mathcal{M}$.

\begin{Remark}The equality \eqref{phi} is equivalent to the pentagon axiom. Associativity and unit in the category respect each other by \eqref{unass}.
 \end{Remark}

Recall the definition of a quasi-Hopf algebra from \cite{Drinfeld}.

\begin{Definition} Let $(H, \Delta, \varepsilon, \Phi)$ be a quasi-bialgebra. Then it is called a quasi-Hopf algebra if there exist $\alpha, \beta \in H $ and anti-automorphism $S\colon H \rightarrow H$, such that
 \begin{gather*}
 S\big(h^1\big)\alpha h^2 = \varepsilon(h) \alpha, \qquad h^1 \beta S\big(h^2\big) = \epsilon(h) \beta .
 \end{gather*}

If one keeps notation as in Remark \ref{Sweedler}, then there are equalities
 \begin{gather*}
 X \beta S(Y) \alpha Z = 1, \qquad S(P) \alpha Q \beta R = 1.
 \end{gather*}
\end{Definition}

\begin{Remark}	We want to emphasize that the antipode $S$ in the definition above is assumed to be invertible.
\end{Remark}

It was shown in \cite{Drinfeld} that the category of $H$-modules for a quasi-Hopf algebra $H$ that are finite dimensional as $k$-vector spaces is rigid. It was shown in \cite{sk} (see also \cite{BPO_closed, majid_closed, SS_closed}) that if we consider all modules, i.e., $\hmod$ then it is biclosed. More precisely, for any $M, N\in \hmod$, we can define the left internal Hom by
\begin{gather*} 
 \operatorname{Hom}^l(M, N)=\operatorname{Hom}_k(M, N),\qquad h\cdot \varphi = h^1\varphi\big(S\big(h^2\big){-}\big).
\end{gather*} Furthermore, we can define the right internal Hom by
\begin{gather} \label{righthom}
 \operatorname{Hom}^r(M, N)=\operatorname{Hom}_k(M, N),\qquad h\cdot \varphi = h^2\varphi\big(S^{-1}\big(h^1\big){-}\big).
\end{gather}

\section{Anti-Yetter--Drinfeld modules for a quasi-Hopf algebra}\label{s:qHopf-ayd}

Yetter--Drinfeld modules for a quasi-Hopf algebra were described in \cite{majid}. The consideration of YD modules as centers of a monoidal category~$\hmod$ was crucial to write down the explicit formulas that generalize those that define the YD modules of the Hopf algebra case. In this section we are going to define the anti-Yetter--Drinfeld modules using the categorical approach from~\cite{hks}. Unlike the Hopf-case, there are two different ways to define aYD modules. The resulting categories are isomorphic to each other, and in turn isomorphic to the center of a~certain bimodule category.

As in \cite{hks} consider a monoidal functor $\#\colon \hmod \rightarrow \hmod $, taking a left $H$-module $M$ to $M^{\#}$, where $M^\#$ is the same as $M$ as a $k$-vector space but the module structure is modified by~$S^2$
 \begin{gather*}
 h \cdot m = S^2(h)m, \qquad \text{for} \quad m \in M^\#.
 \end{gather*}

 Define the $\hmod$-bimodule category $\hhash$ such that it has the same objects as $\hmod$. For $M\in\hhash$ and $V\in\hmod$, the right and left $\hmod$-module structures are given by
 \begin{gather*}
 M \ra V= M\otimes V \qquad \text{and} \qquad V \la M= V^\# \otimes M .
 \end{gather*}

 \begin{Remark}Observe that as required according to Section~\ref{s:General} we have a very trivial identification $V\lab k\simeq k\rab V^{\#}$, indeed the left hand side is $\operatorname{Hom}_k(V,k)$ with $h\cdot\varphi=\varphi(S(h)-)$ whereas the right hand side is $\operatorname{Hom}_k(V^{\#},k)$ with $h\cdot\varphi=\varphi(S^{-1}(h)\cdot-)=\varphi(S(h)-)$. Furthermore, the functor $D$, being essentially a vector space duality functor, reflects isomorphisms, and so, as long as we insist on stability in the sense of Definition~\ref{stab}, we do not need to worry about the difference between the weak and the strong center.
\end{Remark}

If $H$ is a Hopf algebra, it was proved in \cite{hks} that the center $\Zc_\hmod(\hhash) $ is the same as anti-Yetter--Drinfeld modules. We are going to use this fact as a guide and give a description of aYD modules in the quasi-Hopf case.

\subsection{Anti-Yetter--Drinfeld modules I} Below we use a similar approach to \cite{majid} in defining the ``coaction'' from the centralizing isomorphism.

\begin{Lemma}\label{aYD-module lemma}Let $M \in \hhash$. Natural transformations $\tau_{\bullet} \in \operatorname{Nat} (\operatorname{Id} \la M, M \ra \operatorname{Id})$ are in $1$-$1$ correspondence with $k$-linear maps $\rho\colon M \rightarrow M \otimes H $, denoted by $m \mapsto m_{\langle0\rangle} \otimes m_{\langle1\rangle}$, such that
\begin{gather}\label{aYD-module}
h^1 m_{\langle 0\rangle} \otimes h^2 m_{\langle 1\rangle} = \big(h^2 m\big)_{\langle0\rangle} \otimes \big(h^2 m\big)_{\langle 1\rangle} S^2\big(h^1\big).
\end{gather}
\end{Lemma}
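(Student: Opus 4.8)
The plan is to establish the correspondence by a standard Yoneda-type argument, converting a natural transformation into a single linear map by evaluating at the regular representation, and then identifying which linear maps arise this way via naturality.

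\textbf{From $\tau$ to $\rho$.} First I would unpack the two functors involved. For $V\in\hmod$, we have $V\la M = V^\#\ot M$ and $M\ra V = M\ot V$, so $\tau_V\colon V^\#\ot M\to M\ot V$ is $H$-linear and natural in $V$. The plan is to specialize $V = H$ (the left regular module), giving $\tau_H\colon H^\#\ot M\to M\ot H$, and then define $\rho\colon M\to M\ot H$ by $\rho(m) := \tau_H(1\ot m)$, i.e., $m_{\langle 0\rangle}\ot m_{\langle 1\rangle} := \tau_H(1_H\ot m)$. To recover $\tau_V$ from $\rho$ for a general $V$ and $v\in V$, I would use naturality against the $H$-module map $H\to V$, $h\mapsto hv$ (more precisely $H^\#\to V^\#$, which is the same underlying map since $\#$ only twists the action), to conclude $\tau_V(v\ot m) = (\operatorname{id}\ot f_v)\tau_H(1\ot m)$ where $f_v(h) = hv$; equivalently $\tau_V(v\ot m) = m_{\langle 0\rangle}\ot m_{\langle 1\rangle}v$. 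Note here that $v\ot m \in V^\# \ot M$ is the image of $1_H\ot m$ under $f_v\ot\operatorname{id}$, and $f_v$ is a map $H^\#\to V^\#$ precisely because $h\cdot(1_H)$ in $H^\#$ is $S^2(h)$ and $f_v(S^2(h)) = S^2(h)v = h\cdot v$ in $V^\#$. So every $\tau$ is determined by its associated $\rho$, and conversely any $\rho$ produces a candidate family $\tau_V$ by the formula $\tau_V(v\ot m) = m_{\langle 0\rangle}\ot m_{\langle 1\rangle}v$; naturality in $V$ of this candidate is then automatic.

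\textbf{The constraint.} The remaining content is: the candidate $\tau_V$ built from an arbitrary $\rho$ is $H$-linear for every $V$ if and only if equation \eqref{aYD-module} holds. The plan is to write out $H$-linearity of $\tau_H$ explicitly, as this is the universal case (linearity for all $V$ follows from linearity for $V=H$ by the naturality reduction above, and linearity for $V=H$ is a special case, so the two are equivalent). The left action of $h$ on $1_H\ot m\in H^\#\ot M$ is $h^1\cdot_{\#}1_H\ot h^2 m = S^2(h^1)\ot h^2 m$. Applying $\tau_H$ and using the formula $\tau_H(v\ot m') = m'_{\langle 0\rangle}\ot m'_{\langle 1\rangle}v$ with $v = S^2(h^1)$ and $m' = h^2m$, the right side $\tau_H(h\cdot(1\ot m))$ becomes $(h^2m)_{\langle 0\rangle}\ot (h^2m)_{\langle 1\rangle}S^2(h^1)$. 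On the other hand $h\cdot\tau_H(1\ot m) = h\cdot(m_{\langle 0\rangle}\ot m_{\langle 1\rangle}) = h^1 m_{\langle 0\rangle}\ot h^2 m_{\langle 1\rangle}$ using the tensor product module structure on $M\ot H$. Equating these gives exactly \eqref{aYD-module}. Conversely, assuming \eqref{aYD-module}, one runs the computation backwards and also checks $H$-linearity of the general $\tau_V$; this reduces to the $V=H$ case by applying $(\operatorname{id}\ot f_v)$, which is $H$-linear, to both sides.

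\textbf{Main obstacle.} The argument is essentially bookkeeping, so there is no deep obstacle; the one place that requires care is making sure the twist by $\#$ is tracked correctly throughout — in particular that the ``evaluation'' map $f_v\colon H^\#\to V^\#$ is genuinely $H$-linear with the twisted actions, and that the tensor-product action used on both $V^\#\ot M$ and $M\ot V$ is the one coming from $\Delta$ with no hidden appearance of $\Phi$ (which is legitimate here because $\tau_V$ is a map between two-fold, not three-fold, tensor products, so the associator does not enter). I would also remark that one should check that the assignment $\tau\mapsto\rho$ and its inverse $\rho\mapsto\tau$ are mutually inverse as claimed, which is immediate from the formula $\tau_V(v\ot m) = m_{\langle 0\rangle}\ot m_{\langle 1\rangle}v$ once both directions are set up, using $\rho(m) = \tau_H(1\ot m)$ and $m_{\langle 1\rangle}1_H = m_{\langle 1\rangle}$.
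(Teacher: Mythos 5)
Your proposal is correct and follows essentially the same route as the paper's proof: specialize to the regular representation to set $\rho(m)=\tau_H(1\otimes m)$, use naturality against the $H$-linear maps $h\mapsto hv$ to reconstruct $\tau_V(v\otimes m)=m_{\langle 0\rangle}\otimes m_{\langle 1\rangle}v$, and identify $H$-linearity of $\tau_H$ (with the $\#$-twisted action $h\cdot(1\otimes m)=S^2(h^1)\otimes h^2m$) with equation~\eqref{aYD-module}. The extra care you take with the twist on $f_v\colon H^\#\to V^\#$ and with the absence of the associator for two-fold tensor products is exactly the right bookkeeping, and matches what the paper does implicitly.
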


\begin{proof}Consider the morphism $\tau_H \colon H \la M \rightarrow M \ra H $. Define $\rho (m)$ as $\tau_H (1\la m) \in M\otimes H$. Because $\tau_H $ is a morphism in the category (of left $H$-modules), we have: $h \cdot \tau_H(1\la m) = \tau_H (h\cdot (1\la m)) $ for any $h \in H$. The left hand side gives us $h^1 m_{\langle 0\rangle} \otimes h^2 m_{\langle 1\rangle} $. Using the definition of $\hhash$ we can see that $h\cdot (1\la m) = S^2(h^1) \la h^2 m $. The right action of $H$ on itself is a~morphism in $\hmod$, so because $\tau_\bullet$ is a natural transformation, one can rewrite the right hand side as $(h^2 m)_{\langle0\rangle} \otimes (h^2 m)_{\langle 1\rangle} S^2(h^1)$.

Conversely, given the map~$\rho$, for any $V \in \hmod $ one can define the map $\tau_V\colon V \la M \rightarrow M \ra V $ by the rule $v \la m \mapsto m_{\langle0\rangle} \ra m_{\langle1\rangle} v$. It is a morphism of $H$-modules by~\eqref{aYD-module}. Clearly $\tau_\bullet$ is a natural transformation.

These correspondences are mutually inverse. For a given $v \in V$ consider a morphism $H \rightarrow V$ by the rule $h \mapsto h \cdot v$. So, if we want $\tau$ to be a natural transformation, it is uniquely defined from~$\rho$.
\end{proof}

In the Hopf case the $k$-linear map $\rho\colon M \rightarrow M\otimes H$ was a part of a right comodule structure on the left $H$-module $M$. But in the quasi-Hopf case the comodule condition must be replaced (see~\eqref{quasi-comodule} below), since we are now dealing with an~$H$ that is not coassociative (see~\cite{pv} for example).

Let $(M, \tau_\bullet) \in \Zc_\hmod(\hhash)$. By the hexagon axiom of the center, the following diagram is commutative
\[
\begin{tikzpicture}
\matrix (m) [matrix of math nodes, row sep=4em, column sep=8em]
{V \la (W \la M)& V \la (M \ra W) & (V \la M) \ra W \\
	(V\otimes W) \la M& M \ra (V\otimes W) & (M \ra V) \ra W. \\};
\draw[->] (m-1-1) to node {$\operatorname{Id} \la \tau_W $} (m-1-2);
\draw[->] (m-1-2) to node {$\Phi^{-1} $} (m-1-3);
\draw[->] (m-1-3) to node {$\tau_V \ra \operatorname{Id}$} (m-2-3);
\draw[->] (m-2-2) to node {$\Phi^{-1}$} (m-2-3);
\draw[->] (m-1-1) to node {$\Phi^{-1}$} (m-2-1);
\draw[->](m-2-1) to node {$\tau_{V\otimes W} $}(m-2-2);
\end{tikzpicture}
\]

Consider this diagram in the case $V = W = H$. Start with the element $1 \la 1 \la m$ in the upper left corner. The definition of the $k$-linear map $\rho\colon M \rightarrow M\otimes H$ from Lemma~\ref{aYD-module lemma} and the definition of the category $\hhash$ imply the following equality
 \begin{gather}
 (Qm_{\langle 0\rangle})_{\langle 0\rangle} \otimes (Qm_{\langle 0\rangle})_{\langle 1\rangle} S^2(P) \otimes R m_{\langle 1\rangle}\nonumber\\ \qquad{} = P(Rm)_{\langle 0\rangle} \otimes Q((Rm)_{\langle 1\rangle})^1S^2(P) \otimes R((Rm)_{\langle 1\rangle})^2 S^2(Q).\label{quasi-comodule}
 \end{gather}

\begin{Remark}Notice that if $H$ is Hopf algebra, $\Phi$ is trivial and the condition~\eqref{quasi-comodule} comes down to the definition of a right $H$-comodule.
\end{Remark}

Consider the following diagram
\[
\begin{tikzpicture}
\matrix (m) [matrix of math nodes, row sep=4em, column sep=8em]
{H\la M & k\la M & M \\
	 M \ra H & M \ra k & M. \\};
\draw[->] (m-1-1) to node {$ \varepsilon \la \operatorname{Id} $} (m-1-2);
\draw[->] (m-1-2) to node {$\cong $} (m-1-3);
\draw[->] (m-1-3) to node {$\operatorname{Id} $} (m-2-3);
\draw[->] (m-1-2) to node {$ \tau_k $} (m-2-2);
\draw[->] (m-2-2) to node {$ \cong $} (m-2-3);
\draw[->] (m-1-1) to node {$ \tau_H $} (m-2-1);
\draw[->](m-2-1) to node {$ \operatorname{Id} \ra \varepsilon $}(m-2-2);
\end{tikzpicture}
\]

The left square commutes by the naturality of $\tau$ and the right square commutes by the definition of the center. If we start with $1 \la m$ in the upper left corner we will get the equality
\begin{gather} \label{comodule_unit}
 m = \varepsilon (m_{\langle 1\rangle}) m_{\langle0\rangle} .
\end{gather}
 This condition is exactly the same as in the Hopf case.

\begin{Definition} Let $H$ be a quasi-Hopf algebra. A pair $(M, \rho)$, where $M$ is a left $H$-module and $\rho\colon M \rightarrow M\otimes H$ is a $k$-linear map, written as $\rho(m) = m_{\langle 0\rangle} \otimes m_{\langle 1\rangle} $, is called a left-right anti-Yetter--Drinfeld module of type I, if it satisfies the equalities \eqref{aYD-module}, \eqref{quasi-comodule} and \eqref{comodule_unit}.

A morphism of two ${\rm aYD}$ modules $(M, \rho) \rightarrow (M', \rho')$ is an $H$-morphism $f\colon M\rightarrow M'$, such that $\rho'\circ f = (f\otimes \operatorname{Id})\circ \rho $.
 \end{Definition}

\begin{Theorem}The category of aYD-modules of type I for a quasi-Hopf algebra $H$ is equivalent to the weak center $\text{\rm w-}\Zc_\hmod(\hhash)$.
\end{Theorem}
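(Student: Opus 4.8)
The plan is to exhibit a functor in each direction and verify they are mutually inverse equivalences. Most of the work has already been done in the lemmas above; the theorem is essentially a bookkeeping assembly of Lemma~\ref{aYD-module lemma} together with the two diagram computations that produced \eqref{quasi-comodule} and \eqref{comodule_unit}. So first I would recall that Lemma~\ref{aYD-module lemma} gives, for each fixed $M\in\hhash$, a bijection between $\operatorname{Nat}(\operatorname{Id}\la M,M\ra\operatorname{Id})$ and the set of $k$-linear maps $\rho\colon M\to M\ot H$ satisfying \eqref{aYD-module}. The point of the theorem is to upgrade this to an equivalence of categories, so the extra content is: (i) which $\rho$ correspond to natural transformations $\tau_\bullet$ satisfying the \emph{hexagon} axiom (not merely naturality), and (ii) that morphisms match up.

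For direction one, given $(M,\tau_\bullet)\in\text{\rm w-}\Zc_\hmod(\hhash)$, set $\rho(m)=\tau_H(1\la m)$ as in Lemma~\ref{aYD-module lemma}; it satisfies \eqref{aYD-module} automatically. Then I would run exactly the two diagrams displayed before the definition: feeding $1\la1\la m$ through the hexagon with $V=W=H$ yields \eqref{quasi-comodule}, and feeding $1\la m$ through the unit-compatibility square yields \eqref{comodule_unit}. Hence $(M,\rho)$ is an aYD module of type~I. For the reverse direction, given $(M,\rho)$ satisfying \eqref{aYD-module}, \eqref{quasi-comodule}, \eqref{comodule_unit}, define $\tau_V\colon V^\#\ot M\to M\ot V$ by $v\la m\mapsto m_{\langle0\rangle}\ra m_{\langle1\rangle}v$; naturality and $H$-linearity are the content of Lemma~\ref{aYD-module lemma}, so the only thing left to check is that \eqref{quasi-comodule} together with \eqref{comodule_unit} is \emph{exactly} the hexagon axiom evaluated on all $V,W$, not just $V=W=H$. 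This is where the one genuine argument lies: one must argue that it suffices to test the hexagon on the pair $(H,H)$ with the generator $1\la1\la m$, and that the unit triangle need only be tested on $H$ with $1\la m$. The justification is the same ``free object'' observation used at the end of Lemma~\ref{aYD-module lemma}: for $v\in V$, $w\in W$ the maps $H\to V$, $h\mapsto hv$ and $H\to W$, $h\mapsto hw$ let one transport the $(H,H)$ identity to the $(V,W)$ identity by naturality of all the structure maps (including $\Phi$, which is natural). I expect this reduction — carefully chasing that the general hexagon follows from the $(H,H)$ instance via naturality — to be the main obstacle, though it is more tedious than deep.

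Finally I would check functoriality on morphisms. A morphism $(M,\rho)\to(M',\rho')$ is an $H$-map $f$ with $\rho'f=(f\ot\operatorname{Id})\rho$; I would verify this is precisely the condition that $f$, viewed as a map in $\hhash$, intertwines $\tau_\bullet$ and $\tau'_\bullet$, i.e.\ that for every $V$ the square relating $\tau_V$ and $\tau'_V$ along $f$ commutes — which, evaluating on $v\la m$, reads $m_{\langle0\rangle}'\ra\cdots$ versus $f(m)_{\langle0\rangle}'\ra\cdots$ and collapses to exactly $\rho'f=(f\ot\operatorname{Id})\rho$ after using that $f$ is $H$-linear so it commutes with the $V$-action in the second tensor factor. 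Since the two assignments $\rho\mapsto\tau_\bullet$ and $\tau_\bullet\mapsto\rho$ were already shown mutually inverse on objects in Lemma~\ref{aYD-module lemma} and we have now matched morphisms, this gives an isomorphism of categories (in particular an equivalence), completing the proof. I would remark in passing that one gets an isomorphism, not merely an equivalence, since the underlying $H$-module is literally unchanged.
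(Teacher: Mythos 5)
Your proposal is correct and follows essentially the same route as the paper: extract $\rho$ from $\tau_H$ via Lemma~\ref{aYD-module lemma}, translate the hexagon and unit diagrams into \eqref{quasi-comodule} and \eqref{comodule_unit}, and match morphisms through the square intertwining $\tau_H$ and $\tau'_H$. The only difference is that you spell out the reduction of the general hexagon to the $(H,H)$ instance via the ``free object'' naturality argument, which the paper leaves implicit in the sentence ``Formula~\eqref{quasi-comodule} guarantees that $\tau$ satisfies the hexagon axiom''; this is a welcome clarification rather than a divergence.
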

\begin{proof} We have seen that an object in the center $(M, \tau)$ gives us an aYD module $(M, \rho)$. Consider a morphism of central objects $f\colon (M, \tau) \rightarrow (M', \tau')$. In particular the following diagram must commute:
\[
\begin{tikzpicture}
\matrix (m) [matrix of math nodes, row sep=4em, column sep=8em]
{H\la M & M \ra H \\
	H\la M' & M' \ra H. \\};
\draw[->] (m-1-1) to node {$ \tau_H $} (m-1-2);
\draw[->] (m-1-2) to node {$f\ra \operatorname{Id} $} (m-2-2);
\draw[->] (m-1-1) to node {$\operatorname{Id}\la f $} (m-2-1);
\draw[->](m-2-1) to node {$ \tau'_H $}(m-2-2);
\end{tikzpicture}
\]
By the construction of $\rho$ one gets the condition $\rho'\circ f = (f\otimes \operatorname{Id})\circ \rho $.

Conversely, take an aYD module $(M, \rho)$. By Lemma \ref{aYD-module lemma}, there is a natural transformation $\tau\colon \operatorname{Id} \la M \rightarrow M \ra \operatorname{Id}$. Formula~\eqref{quasi-comodule} guarantees that~$\tau$ satisfies the hexagon axiom. Equality~\eqref{comodule_unit} gives that $\tau_k = \operatorname{Id}$.
 \end{proof}

\subsection{Anti-Yetter--Drinfeld modules II}

There is the second way to introduce aYD modules. Let us again consider the central element $(M, \tau) \in \Zc_\hmod(\hhash) $. So for any $V \in \hmod$ we have a natural isomorphism: $\tau_V\colon V^\# \otimes M \rightarrow M \otimes V $. Using internal Homs this gives a morphism
 \begin{gather*}
 \widehat{\tau}_V\colon \ M \rightarrow \operatorname{Hom}^r \big(V^\#, M \otimes V\big).
 \end{gather*}

Now we want to introduce a new $H$-module $M \otimes^r H$ which is the same as $M\otimes_k H$ as a vector space, but the $H$-action is different
 \begin{gather*}
 x \cdot (m \otimes h) = x^{21} m \otimes x^{22}h S\big(x^1\big).
 \end{gather*}

 For any $V \in \hmod$ define a map $r_V\colon M \otimes^r H \rightarrow \operatorname{Hom}^r(V^\#, M \otimes V) $ by the rule: $m\otimes h \mapsto (v \mapsto m \otimes hv) $. This is a morphism in the category by construction of $M \otimes^r H $ and $V^\#$.

 We can formulate a lemma similar to Lemma~\ref{aYD-module lemma}.

 \begin{Lemma}	Let $M \in \hhash$. Natural transformations $\tau \in \operatorname{Nat} (\operatorname{Id} \la M, M \ra \operatorname{Id})$ are in $1$-$1$ correspondence with $k$-linear maps $\lambda\colon M \rightarrow M \otimes H $, written as $m \mapsto m_{[0]} \otimes m_{[1]}$, such that
\begin{gather}\label{aYD-moduleII}
(h m)_{[0]} \otimes (h m)_{[1]} = h^{21} m_{[0]} \otimes h^{22} m_{[1]} S\big(h^1\big).
\end{gather}
\end{Lemma}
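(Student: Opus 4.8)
The plan is to follow the template of Lemma~\ref{aYD-module lemma}, transported through the $\operatorname{Hom}^r$-adjunction so that a natural transformation $\tau$ is encoded by a single $H$-equivariant map into the twisted module $M\otimes^r H$. Given $\tau\in\operatorname{Nat}(\operatorname{Id}\la M, M\ra\operatorname{Id})$, I would first transpose each component $\tau_V\colon V^\#\otimes M\to M\otimes V$ across the adjunction to obtain the $H$-module maps $\widehat{\tau}_V\colon M\to\operatorname{Hom}^r(V^\#, M\otimes V)$ already introduced above; naturality of the adjunction propagates the naturality of $\tau_\bullet$ in $V$. Set $\lambda(m):=\widehat{\tau}_H(m)(1)\in M\otimes H$ and write $\lambda(m)=m_{[0]}\otimes m_{[1]}$. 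The crucial claim is that $\widehat{\tau}_V=r_V\circ\lambda$ for every $V$: applying the naturality of $\widehat{\tau}_\bullet$ to the $H$-linear maps $H\to V$, $h\mapsto hv$, one gets $\widehat{\tau}_V(m)(v)=m_{[0]}\otimes m_{[1]}v=r_V(\lambda(m))(v)$, exactly the uniqueness argument used at the end of the proof of Lemma~\ref{aYD-module lemma}. In particular $\widehat{\tau}_H=r_H\circ\lambda$; since $r_H$ admits the left inverse $\varphi\mapsto\varphi(1)$ on underlying sets and is therefore injective, and both $\widehat{\tau}_H$ and $r_H$ are $H$-module maps, $\lambda$ is forced to be a morphism $M\to M\otimes^r H$ in $\hmod$. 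Unwinding the action $x\cdot(m\otimes h)=x^{21}m\otimes x^{22}hS(x^1)$ on $M\otimes^r H$, the identity $\lambda(hm)=h\cdot\lambda(m)$ is precisely~\eqref{aYD-moduleII}.

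Conversely, a $k$-linear $\lambda\colon M\to M\otimes H$ satisfying~\eqref{aYD-moduleII} is by the same computation exactly a morphism $M\to M\otimes^r H$ in $\hmod$; composing it with the morphisms $r_V$ gives $H$-module maps $r_V\circ\lambda\colon M\to\operatorname{Hom}^r(V^\#, M\otimes V)$, and transposing these back across the adjunction produces $\tau_V\colon V^\#\otimes M\to M\otimes V$, with $\tau_\bullet$ natural in $V$ because $r_\bullet$ is (that is, $r_W(\xi)\circ g^\#=(\operatorname{Id}_M\otimes g)\circ r_V(\xi)$ for $H$-linear $g$) and the adjunction is natural. The two assignments are mutually inverse: from $\tau$ one has $\widehat{\tau}_\bullet=r_\bullet\circ\lambda$, so transposing back returns $\tau$; from $\lambda$, the associated map is $\widehat{\tau}_H=r_H\circ\lambda$, whence $\widehat{\tau}_H(m)(1)=r_H(\lambda(m))(1)=\lambda(m)$. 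Uniqueness of the factorization through $r_\bullet$ makes both checks immediate.

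The step I expect to be the main obstacle is the bookkeeping around the $\operatorname{Hom}^r$-adjunction: verifying that $\widehat{\tau}_V$ is genuinely $H$-linear and natural in $V$, and that the factorization $\widehat{\tau}_V=r_V\circ\lambda$ holds for \emph{all} $V$ and not merely for $V=H$ --- this is where the $H$-linear maps $h\mapsto hv$ and the naturality of $\tau$ enter, exactly as in the Hopf-algebra case. By contrast, the explicit form of the evaluation morphism of $\operatorname{Hom}^r$, with its $\alpha$, $\beta$, $\Phi$ corrections, should not be needed if everything is phrased through the adjunction and the maps $r_V$; it would only be required if one wanted to compare $\lambda$ directly with the Type~I map $\rho$, which this lemma does not ask for.
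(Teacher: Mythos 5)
Your proof is correct and follows essentially the same route as the paper's: transpose $\tau_V$ to $\widehat{\tau}_V$ via the $\operatorname{Hom}^r$-adjunction, set $\lambda(m)=\widehat{\tau}_H(m)(1)$, and identify condition~\eqref{aYD-moduleII} with $H$-linearity of $\lambda\colon M\to M\otimes^r H$ through the factorization $\widehat{\tau}_V=r_V\circ\lambda$. You merely make explicit the steps the paper compresses into ``everything is constructed naturally and the one-to-one correspondence is clear,'' namely the naturality argument with the maps $h\mapsto hv$ and the injectivity of $r_H$; your closing observation that the $\alpha$, $\beta$, $\Phi$-laden evaluation formula is only needed later (for~\eqref{lambda}) also matches the paper.
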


 \begin{proof}
First assume that $\tau $ is given. Then for $m \in M$ we define $\lambda(m) := (\widehat{\tau}_H (m)) (1) $. From the fact that $(\widehat{\tau}_H (m)) $ is a right internal homomorphism we get~\eqref{aYD-moduleII}.

Conversely, given a $k$-linear map $\lambda\colon M \rightarrow M \otimes H $ we can consider it as an $H$-homomorphism $M \rightarrow M \otimes^r H$ by~\eqref{aYD-moduleII}. Now for any $V \in \hmod$ we define $\widehat{\tau}_V$ by
\[
	\begin{tikzpicture}
	\matrix (m) [matrix of math nodes, row sep=3em, column sep=5em]
	{M & \operatorname{Hom}^r(V^\#, M\otimes V) \\
		M \otimes^r H. \\};
	\draw[->] (m-1-1) to node {$\widehat{\tau}_V $} (m-1-2);
	\draw[->] (m-1-1) to node {$\lambda$} (m-2-1);
	\draw[->](m-2-1) to node {$r_V $}(m-1-2);
	\end{tikzpicture}
\]
Everything is constructed naturally and the one-to-one correspondence is clear.	
 \end{proof}

 \begin{Remark}
It will be useful to explicitly write the reconstruction formula. Given $\lambda\colon M \rightarrow M\otimes H$, satisfying \eqref{aYD-moduleII}, $\tau_V \colon V^\# \otimes M \rightarrow V \otimes M$ is built by the formula
\begin{gather}\label{lambda}
v \otimes m \mapsto R^1 m_{[0]} \otimes R^2 m_{[1]} S(Q) S(\alpha) S^2(P) v.
\end{gather}
 \end{Remark}

As above to write the replacement of the comodule condition consider the hexagon axiom. Then, using formula~\eqref{lambda}, we get the following equality
 \begin{gather}
 PR^1(Rm)_{[0]} \otimes Q\big(R^2(Rm)_{[1]} \kappa \big)^1S^2(P) \otimes R \big(R^2(Rm)_{[1]} \kappa \big)^2 S^2(Q)\nonumber\\
\qquad{} =R^1 \big(QR^1m_{[0]}\big)_{[0]} \otimes R^2 \big(QR^1m_{[0]}\big)_{[1]} \kappa S^2(P) \otimes RR^2m_{[1]} \kappa, \label{quasi-comoduleII}
 \end{gather}
 where we set $\kappa = S(Q) S(\alpha) S^2(P)$.

 The unital condition that $\tau_k\colon k \la M \rightarrow M \ra k$ is identity gives
 \begin{gather*}
 m = R^1 m_{[0]} \cdot \varepsilon\big(R^2 m_{[1]} \kappa\big) = \varepsilon(m_{[1]}) R m_{[0]} \cdot(\kappa).
 \end{gather*}
 Here we used that $\varepsilon$ is algebra map and the formula~\eqref{one}. For a quasi-Hopf algebra the equality $\varepsilon \circ S = \varepsilon $ holds (for the proof see~\cite{Drinfeld}). So we can simplify $\varepsilon(\kappa) = \varepsilon(\alpha) \varepsilon(Q) \varepsilon (P) $. Using~\eqref{unass}, we get the final equality
\begin{gather} \label{comodule_unitII}
 m = \varepsilon(m_{[1]}) m_{[0]} \varepsilon(\alpha).
\end{gather}

 \begin{Definition}Let $H$ be a quasi-Hopf algebra. A pair $(M, \lambda)$, where $M$ is a left $H$-module and $\lambda\colon M \rightarrow M\otimes H$ is a $k$-linear map, written as $\lambda(m) = m_{[0]} \otimes m_{[1]} $, is called a left-right anti-Yetter--Drinfeld module of type II, if it satisfies the equalities~\eqref{aYD-moduleII}, \eqref{quasi-comoduleII} and \eqref{comodule_unitII}.
 \end{Definition}

 And as before we have the following theorem with a proof that is very similar to the type I case and so is omitted.

\begin{Theorem}The category of aYD-modules of type~II for a quasi-Hopf algebra $H$ is equivalent to $\text{\rm w-}\Zc_\hmod(\hhash)$.
 \end{Theorem}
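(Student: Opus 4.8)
The plan is to mirror exactly the proof strategy used for the Type I theorem, since the statement asserts the same kind of equivalence with the weak center. First I would observe that the preceding lemma already establishes a bijective correspondence between natural transformations $\tau_\bullet \in \operatorname{Nat}(\operatorname{Id}\la M, M\ra\operatorname{Id})$ and $k$-linear maps $\lambda\colon M\to M\otimes H$ satisfying \eqref{aYD-moduleII}; moreover, the derivations leading to \eqref{quasi-comoduleII} and \eqref{comodule_unitII} show that an object $(M,\tau)$ of the weak center produces a $\lambda$ satisfying all three conditions. So the content of the theorem is the converse plus functoriality, and that these assignments are mutually inverse on objects and morphisms.

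The steps, in order: (1) Given an aYD module of type II, i.e.\ a triple $(M,\lambda)$ satisfying \eqref{aYD-moduleII}, \eqref{quasi-comoduleII}, \eqref{comodule_unitII}, use the lemma to produce $\tau_\bullet$, explicitly via the reconstruction formula \eqref{lambda}. (2) Verify $\tau$ satisfies the hexagon axiom: this is precisely the assertion that the hexagon diagram (in the case $V=W=H$, evaluated on $1\la 1\la m$) holds, and unwinding that diagram with formula \eqref{lambda} is exactly what produces \eqref{quasi-comoduleII} — so \eqref{quasi-comoduleII} is equivalent to the hexagon, by the same uniqueness argument as in Lemma~\ref{aYD-module lemma} (a general element of $V\la(W\la M)$ is hit by a map from $H\la(H\la M)$, so checking the hexagon on the universal element suffices). (3) Verify $\tau_k=\operatorname{Id}$: this is the content of the unital diagram, which unwinds to \eqref{comodule_unitII}, again by the same reasoning. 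Together (2) and (3) show $(M,\tau)\in\text{\rm w-}\Zc_\hmod(\hhash)$. (4) For morphisms: an $H$-linear $f$ with $\lambda'\circ f=(f\otimes\operatorname{Id})\circ\lambda$ induces a commuting square with $\tau_H$ and $\tau'_H$ by construction of $\tau$ from $\lambda$ via $r_V$ and $\widehat\tau_V$, hence $f$ is a morphism of central objects; conversely a morphism of central objects gives the $\lambda$-compatibility by restricting the naturality/centrality square to $V=H$ and evaluating at $1$. (5) Conclude mutual inverseness on objects from the lemma's bijection and on morphisms from steps (4), giving the equivalence of categories.

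The main obstacle — or rather the only place requiring genuine computation rather than diagram-chasing — is confirming that unwinding the hexagon diagram with the reconstruction formula \eqref{lambda} really yields precisely \eqref{quasi-comoduleII} and not merely something equivalent to it after manipulation with the pentagon identity \eqref{phi} and the quasi-Hopf axioms. Since \eqref{lambda} involves the element $\kappa=S(Q)S(\alpha)S^2(P)$ and its interaction with $\Phi$, $\Phi^{-1}$, and the antipode under $\Delta$, the bookkeeping is heavier than in the Type I case where $\tau_V$ was given by the transparent formula $v\la m\mapsto m_{\langle0\rangle}\ra m_{\langle1\rangle}v$. However, this computation has effectively already been carried out in the text preceding the definition of type II modules — it is exactly how \eqref{quasi-comoduleII} and \eqref{comodule_unitII} were obtained — so in the write-up I would simply point back to those derivations and note that they are reversible, which is why the theorem's proof "is very similar to the type I case and so is omitted."
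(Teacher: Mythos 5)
Your proposal is correct and is exactly the argument the paper intends: the paper omits the Type~II proof precisely because it parallels the Type~I case, and your steps (the bijection from the Type~II lemma via the reconstruction formula~\eqref{lambda}, the hexagon giving~\eqref{quasi-comoduleII}, the unit condition giving~\eqref{comodule_unitII}, and the morphism compatibility via the centrality square at $V=H$) reproduce that parallel faithfully. You also correctly identify that the only substantive computation, unwinding the hexagon through~\eqref{lambda}, was already performed in the text where~\eqref{quasi-comoduleII} was derived.
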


\begin{Remark}Type I and type II aYD modules are different, though of course are equivalent as categories. The difference between them is like the difference between two maps: $H \otimes \operatorname{Hom}^r(H,V) \rightarrow V $, where the first map is $ h \otimes f \mapsto f(h)$ (naive evaluation) and the second one is $\text{\rm ev}^r(h\otimes f) $ (actual evaluation). In the Hopf case, these two evaluations are the same, but this is no longer true for a quasi-Hopf algebra.

The reader is also invited to consult the proof \cite[Lemma~2.2]{hks} that, in the Hopf case, the formula~\eqref{aYD-module} is equivalent to~\eqref{aYD-moduleII}.
 \end{Remark}

\subsection*{Acknowledgements}

The authors wish to thank Masoud Khalkhali for stimulating questions and discussions. The research of the second author was supported in part by the NSERC Discovery Grant number 406709. In addition we thank the referees for their helpful corrections.

\pdfbookmark[1]{References}{ref}
\LastPageEnding

\end{document}